\documentclass[reqno, 11pt]{amsart}
\usepackage{graphicx, enumerate, amsmath, amssymb,amsthm,manfnt,hyperref}
\usepackage[utf8]{inputenc} 
\setcounter{tocdepth}{1}

\oddsidemargin=0.1in \evensidemargin=0.1in \textwidth=6.4in
\headheight=.2in \headsep=0.1in \textheight=8.4in
\vfuzz2pt 
\hfuzz2pt 
\newtheorem{thm}{Theorem}[section]
\newtheorem{cor}[thm]{Corollary}
\newtheorem{lem}[thm]{Lemma}
\newtheorem{prop}[thm]{Proposition}
\newtheorem{result}[thm]{Result}

\theoremstyle{definition}

\theoremstyle{remark}

\newcommand{\norm}[1]{\left\Vert#1\right\Vert}
\newcommand{\abs}[1]{\left\vert#1\right\vert}

\newcommand{\rl}{{\mathbb{R}}}
\newcommand{\cx}{{\mathbb{C}}}

\newcommand{\h}{\mathbb{H}}

\newcommand{\dbar}{\overline{\partial}}

\newcommand{\tensor}{\otimes}
\newcommand{\csor}{ \widehat{\otimes} }



\setlength{\unitlength}{1mm}
\title[Explicitly Computable Cohomology]{Some non-pseudoconvex
domains with explicitly computable non-Hausdorff Dolbeault cohomology}
\author{Debraj Chakrabarti}
\address{Central Michigan University, Mt. Pleasant,  MI 48859, USA}
\email{chakr2d@cmich.edu}
\thanks{ This work was partially supported by a grant from the Simons Foundation (\#316632), and also by an Early Career internal grant from Central Michigan University.}
\begin{document}
\maketitle
\begin{abstract}We explicitly compute the Dolbeault cohomologies of certain domains in complex space generalizing the classical Hartogs figure. The cohomology groups are non-Hausdorff topological vector spaces, and it is possible to identify the reduced (Hausdorff) and the indiscrete part of the cohomology.\end{abstract}
\section{Introduction}
\subsection{Topology of Dolbeault groups} The Dolbeault cohomology groups of a complex manifold are among its 
fundamental invariants. Usually, in complex analysis, motivated  by the two classical nineteenth 
century examples of planar domains and compact Riemann surfaces, we consider manifolds which have ``nice'' cohomology. Important results in this direction (under  extra  ``positivity" or ``convexity"  conditions such as  pseudoconvexity, Stein-ness, $q$-convexity, Kählerness, existence of 
 a positive line bundle, etc.) assert that the cohomology in a certain degree vanishes or is of finite dimension. 
 
For a general complex manifold, there is very little that one can say about the cohomology. Typically, the natural linear 
topology of the Dolbeault group (arising 
as the quotient of the Fréchet topology on the space of smooth $\dbar$-closed forms of a particular degree by the subspace 
of $\dbar$-exact forms) is not Hausdorff, since there is no reason in general why the $\dbar$-operator should have closed range. Manifolds with non-Hausdorff Dolbeault cohomology, of which many examples are known, arise primarily as counter-examples  (e.g. \cite{serre,cassa,Mal75, Demailly78}.) 
Among the simplest 
 examples of such domains is the classical {\em Hartogs figure}, the venerable non-pseudoconvex
 domain contained in the unit bidisc  $\Delta^2\subset\cx^2$ represented as
\begin{equation}\label{eq-hf}
\h_1=\left\{(z_1,z_2)\in \Delta^2\colon \abs{z_1}> \frac{1}{2} \text{ or  } \abs{z_2}<\frac{1}{2}\right\}.
 \end{equation}
 
 There are partial results, especially for domains in $\cx^2$ or in Stein surfaces, which give sufficient condition for the Dolbeault cohomology in certain degrees to be Hausdorff (e.g. \cite{ MR847655, MR3078262}.)  
 Many aspects of the structure of the cohomology groups
 are not well-understood. For example, recall that a  (possibly  non-Hausdorff) topological vector space $E$ can be written as a topological direct sum of 
  subspaces as $E=E_{\rm ind}\oplus E_{\rm red}$, where, in the subspace topology, the linear subspace $E_{\rm ind}$ is {\em indiscrete} 
(the only non-empty open set is the whole space), and the linear subspace $E_{\rm red}$ is Hausdorff (see, e.g.  \cite[Theorem~5.11]{kena}). 
Therefore, the cohomology group has an indiscrete and an Hausdorff direct summand, the latter being referred to as the ``reduced'' cohomology (see \cite{cassa}.)  It is natural to ask what features of the geometry of the domain are reflected 
in this  decomposition of the cohomology into the ``reduced'' and ``indiscrete'' parts. 
 
 As a step towards understanding such phenomena, we compute {\em explicitly} the cohomology of a class of domains in Stein manifolds
 generalizing the Hartogs figure \eqref{eq-hf}.  Let $Z$ be a connected Stein manifold, and let $Z_0$ be a  open 
Stein domain in $Z$ with $Z_0\not=Z$. We call $(Z_0,Z)$ a {\em Stein pair}. For Stein pairs $(X_0,X)$ and $(Y_0,Y)$ we consider the domain $\h\subsetneq X\times Y$, defined as 
 \begin{equation}    \label{eq-ghf}\h= (X\times Y_0) \cup (Y\times X_0),
\end{equation}
which we may refer to as a  {\em generalized Hartogs figure}.
 Denoting the unit disc in the plane by $\Delta$, the domain $\h_1$  in  \eqref{eq-hf} corresponds to $X=Y=\Delta$, $X_0=\left\{z\in X\colon \abs{z}>\frac{1}{2}\right\}$, 
and $Y_0=\left\{z\in Y\colon \abs{z}< \frac{1}{2}\right\}.$

\subsection{Results}We adopt the following notation: if $Z_0$ is a domain in a manifold $Z$, and $\mathcal{Q}$ is a subspace of $\mathcal{O}(Z)$, then  $\mathcal{Q}|_{Z_0}$ denotes the space  of restrictions to 
$Z_0$ of functions in $\mathcal{Q}$.  We think of $\mathcal{Q}|_{Z_0}$ as a subspace of $\mathcal{O}(Z_0)$ with 
the subspace topology.  Recall that the Stein pair $(Z_0,Z)$ is called {\em Runge}, if  $\mathcal{O}(Z)|_{Z_0}$ is dense in $\mathcal{O}(Z_0)$. Also recall
that a topological space is said to be {\em indiscrete} if the only nonempty open set in it is the whole space. A simple class of 
domains with non-Hausdorff cohomology is given by the following result:
\begin{prop}\label{prop-nonhaus} 
If at least one of the pairs $(X_0,X)$ and $(Y_0,Y)$ is Runge, then for each $p$ with $0\leq p \leq \dim \h$,  the space 
$H^{p,1}(\h)$ is either the zero space, or an indiscrete topological vector space of  uncountably infinite dimension. 
Further, there is at least one $p$ in this range for which the latter option holds. 

For $q>1$, we have $H^{p,q}(\h)=0$, and this last fact holds whenever $(X_0,X)$ and $(Y_0,Y)$ are Stein pairs, without
the hypothesis that one of them is Runge.
\end{prop}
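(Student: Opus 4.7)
The plan is to derive the proposition from the Mayer--Vietoris long exact sequence associated to the Stein cover $\h = U \cup V$, where $U = X \times Y_0$ and $V = X_0 \times Y$. Both $U$ and $V$ are products of Stein manifolds, hence Stein; their intersection $U \cap V = X_0 \times Y_0$ is likewise Stein, so Cartan's Theorem~B gives $H^{p,q}(U) = H^{p,q}(V) = H^{p,q}(U \cap V) = 0$ for every $p$ and every $q \geq 1$. Feeding these vanishings into Mayer--Vietoris immediately yields $H^{p,q}(\h) = 0$ for $q \geq 2$ --- the final assertion of the proposition, which uses only the Stein pair hypothesis --- together with the four-term exact sequence
\begin{equation*}
0 \longrightarrow \Omega^p(\h) \longrightarrow \Omega^p(U) \oplus \Omega^p(V) \xrightarrow{\;\rho\;} \Omega^p(X_0 \times Y_0) \longrightarrow H^{p,1}(\h) \longrightarrow 0,
\end{equation*}
where $\rho(g,h) = g|_{X_0 \times Y_0} - h|_{X_0 \times Y_0}$. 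Thus $H^{p,1}(\h)$ is identified with the topological cokernel of $\rho$, namely the quotient of the Fréchet space $\Omega^p(X_0 \times Y_0)$ by the image of $\rho$.

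For the dichotomy, assume $(X_0, X)$ is Runge (the other case is symmetric). A Kunneth-type decomposition $\Omega^p(X \times Y_0) \cong \bigoplus_{a+b=p} \Omega^a(X) \, \widehat{\otimes} \, \Omega^b(Y_0)$ in the category of nuclear Fréchet spaces reduces the density question to the one-variable Runge statement that $\mathcal{O}(X)|_{X_0}$ is dense in $\mathcal{O}(X_0)$. Tensoring with the identity on the nuclear factor $\Omega^b(Y_0)$ preserves dense image, so $\Omega^p(U)|_{X_0 \times Y_0}$ is dense in $\Omega^p(X_0 \times Y_0)$, and \emph{a fortiori} so is $\operatorname{image}(\rho)$. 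Hence $H^{p,1}(\h)$ is either zero or a nonzero quotient of a Fréchet space by a proper dense subspace, which automatically carries the indiscrete topology. For the dimension of such a nonzero quotient to be uncountable, I would argue by contradiction: if $\operatorname{coker}(\rho)$ had countable dimension, then $\Omega^p(X_0 \times Y_0)$ would be a countable ascending union of proper subspaces, each obtained from $\operatorname{image}(\rho)$ by adjoining a finite-dimensional complement. Each such subspace, being a continuous image of a Polish group, is analytic and therefore has the Baire property; by Baire's theorem one of them is non-meager in $\Omega^p(X_0 \times Y_0)$, and by Pettis' theorem a non-meager subgroup with the Baire property of a Polish abelian group is open, hence equal to the whole space --- a contradiction.

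The remaining piece, which I expect to be the main obstacle, is to exhibit some $p$ for which $H^{p,1}(\h) \neq 0$. The natural candidate is $p = 0$. Choose $x_0 \in \partial X_0 \subset X$ and $y_0 \in \partial Y_0 \subset Y$ (nonempty since $X_0 \subsetneq X$ and $Y_0 \subsetneq Y$ are proper and $X$, $Y$ are connected), and select $\phi \in \mathcal{O}(X_0)$, $\psi \in \mathcal{O}(Y_0)$ that fail to extend holomorphically across $x_0$, $y_0$ respectively; such functions exist because any proper open Stein subset of a Stein manifold admits holomorphic functions that are unbounded along a sequence converging to a prescribed boundary point, by Stein exhaustion and holomorphic convexity. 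The proposed witness is the separated function $F(x,y) = \phi(x)\psi(y) \in \mathcal{O}(X_0 \times Y_0)$. Since $\operatorname{image}(\rho)$ is dense, no continuous linear functional can separate $F$ from it; the non-membership must therefore be established by a genuinely algebraic argument exploiting the product structure, along the lines of the Laurent-expansion analysis in the annulus factor that witnesses the classical Hartogs phenomenon, converting the two one-variable non-extensions into a non-splitting of the bilinear product $F$ as a sum $g|_{X_0 \times Y_0} + h|_{X_0 \times Y_0}$ with $g \in \mathcal{O}(U)$, $h \in \mathcal{O}(V)$.
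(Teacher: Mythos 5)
Your treatment of three of the four claims is sound and runs essentially parallel to the paper's: the Mayer--Vietoris sequence for the two-set Stein cover $\{X\times Y_0,\, X_0\times Y\}$ is the same device as the paper's \v{C}ech/Leray computation, and it gives both the vanishing for $q\geq 2$ and the identification of $H^{p,1}(\h)$ with the cokernel of the restriction map $\rho$. The density of $\operatorname{im}(\rho)$ under the Runge hypothesis (via the K\"unneth splitting of $\Omega^p$ on a product of Stein manifolds) matches the paper's appeal to Grauert--Remmert, and your Baire--Pettis argument for uncountable dimension is a correct, self-contained substitute for the paper's citation of the Laufer--Siu theorem; that is a genuine (if minor) improvement in self-containedness.

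However, the last paragraph is a gap, not a proof, and it is precisely the step on which the whole proposition turns. You must produce \emph{some} $p$ with $H^{p,1}(\h)\neq 0$, and your plan --- show that the single function $F=\phi\otimes\psi$ is not of the form $g|_{X_0\times Y_0}-h|_{X_0\times Y_0}$ --- is only asserted ``along the lines of the Laurent-expansion analysis.'' That Laurent analysis is available only when the factors carry Reinhardt-type structure (it is exactly what the paper's Theorem~1.3 exploits under the extra ``split'' hypothesis); for a general Stein pair there is no expansion to separate the negative-frequency part, and it is not even clear that $p=0$ is the right degree in general --- the paper only claims non-vanishing at $p=0$ for parallelizable $X,Y$ (Corollary~1.2). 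The paper sidesteps the need for an explicit witness entirely: it proves (Lemma~3.1) that $\h$ is not Stein, by locating polydisc coordinate charts near $\partial X_0$ and $\partial Y_0$ inside which $\h$ contains a standard Hartogs figure, so that every $f\in\mathcal{O}(\h)$ extends across some boundary point; since an open subset of a Stein manifold with $H^{q}(\cdot,\Omega^p)=0$ for all $p$ and all $q\geq 1$ would be Stein, at least one $H^{p,1}(\h)$ must be nonzero. To close your argument you would need either to supply this non-Steinness argument (or an equivalent), or to carry out in full the algebraic non-splitting of $\phi\otimes\psi$, which in this generality is not routine.
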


In the important special case when $X,Y$ are domains in complex vector spaces  we can say more:
\begin{cor}\label{cor-thm1}
Suppose that the Stein manifolds $X,Y$ are parallelizable. Then for each 
$p$ with $0\leq p \leq \dim \h$ the cohomology $H^{p,1}(\h)$ is an indiscrete topological vector space of uncountable dimension. 
\end{cor}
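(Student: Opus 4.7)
The plan is to leverage the parallelizability of $X$ and $Y$ in order to reduce the computation of $H^{p,1}(\h)$ for every $p$ to the single case $p=0$, and then to invoke Proposition~\ref{prop-nonhaus}.

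Since $X$ and $Y$ are parallelizable, so is the product $X\times Y$, and hence so is the open subset $\h$. Setting $N=\dim\h$ and fixing a global holomorphic frame $\omega_1,\ldots,\omega_N$ for $\Omega^{1,0}_{\h}$, the wedge products $\omega_I=\omega_{i_1}\wedge\cdots\wedge\omega_{i_p}$ with $I=(i_1<\cdots<i_p)$ form a global holomorphic frame of $\Omega^{p,0}_{\h}$ of rank $r_p=\binom{N}{p}\geq 1$. Since each $\omega_I$ is $\dbar$-closed, the assignment $(\beta_I)\mapsto\sum_I \beta_I\wedge\omega_I$ is a bicontinuous linear isomorphism between the space of $r_p$-tuples of smooth $(0,q)$-forms and the space of smooth $(p,q)$-forms on $\h$, intertwining the componentwise $\dbar$ with the ordinary $\dbar$. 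Passing to cohomology yields a topological isomorphism $H^{p,q}(\h)\cong H^{0,q}(\h)^{r_p}$.

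In particular, $H^{p,1}(\h)\cong H^{0,1}(\h)^{r_p}$ topologically for every $0\leq p\leq N$. By Proposition~\ref{prop-nonhaus} there exists at least one $p_0$ with $H^{p_0,1}(\h)$ nonzero, and the isomorphism just established forces $H^{0,1}(\h)\neq 0$. Applying the dichotomy of Proposition~\ref{prop-nonhaus} at $p=0$, the space $H^{0,1}(\h)$ is therefore indiscrete of uncountable dimension. Since a finite product of indiscrete topological vector spaces (with its product topology) is itself indiscrete, and finite products of uncountable-dimensional spaces remain uncountable-dimensional, $H^{p,1}(\h)\cong H^{0,1}(\h)^{r_p}$ is indiscrete of uncountable dimension for every $0\leq p\leq N$.

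The main delicacy is ensuring that the frame-induced isomorphism is genuinely a topological isomorphism of Fr\'echet $\dbar$-complexes, and not merely an algebraic one, so that the induced map on cohomology respects the quotient topology. This matters because indiscreteness is a purely topological property. The bicontinuity is routine: coefficient extraction $\alpha\mapsto\beta_I$ is a continuous local operation with respect to the natural Fr\'echet topologies on form spaces, while the reverse assignment is a finite sum of continuous multiplications by smooth holomorphic sections.
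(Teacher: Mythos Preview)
Your proof is correct and follows essentially the same approach as the paper: both use parallelizability to obtain a topological isomorphism $H^{p,1}(\h)\cong\bigoplus_{j=1}^{\binom{N}{p}}H^{0,1}(\h)$, deduce $H^{0,1}(\h)\neq 0$ from the existence of some nonvanishing $H^{p_0,1}(\h)$, invoke the dichotomy of Proposition~\ref{prop-nonhaus} at $p=0$, and then transfer indiscreteness and uncountable dimension back to every $H^{p,1}(\h)$. The only cosmetic difference is that the paper phrases the key isomorphism as a sheaf isomorphism $\Omega^p\cong\bigoplus\Omega^0$, whereas you construct it directly at the level of the smooth Dolbeault complex via a global holomorphic frame; your added remarks on bicontinuity make explicit a point the paper takes for granted.
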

We now introduce further hypotheses on the pairs $(X_0,X)$ and $(Y_0,Y)$ in \eqref{eq-ghf} so that we can explicitly 
compute the cohomology in certain degrees. We say that the Stein pair $(Z_0,Z)$ is
\begin{enumerate}
\item {\em Split}  if   $\mathcal{O}(Z)|_{Z_0}$ is a closed subspace of $\mathcal{O}(Z_0)$, and
there is a {\em closed} subspace $Q(Z_0,Z)$ of $\mathcal{O}(Z_0)$ such that we have a direct sum representation
\begin{equation}\label{eq-complement}
\mathcal{O}(Z_0)= \mathcal{O}(Z)|_{Z_0} \oplus Q(Z_0,Z).
\end{equation}
\item {\em Quasi-split} if   $\mathcal{O}(Z)|_{Z_0}$ is neither dense nor closed in  $\mathcal{O}(Z_0)$, and there is a closed subspace $Q_r(Z_0,Z)$ of  $\mathcal{O}(Z_0)$ such that 
\begin{equation}\label{eq-complement2}
  \mathcal{O}(Z_0)= \overline{\mathcal{O}(Z)|_{Z_0}} \oplus Q_r(Z_0,Z).
\end{equation}
\end{enumerate}
To illustrate this somewhat uncommon situation, 
let $Z$ be the unit disc $\{\abs{z}<1\}\subset \cx$, and  consider the annuli  $Z_1=\left\{\frac{1}{2}<\abs{z}<1\right\}$ and
$Z_2 = \left\{\frac{1}{2}<\abs{z}<\frac{3}{4}\right\}$. Then it is not difficult to see that the pair $(Z_1,Z)$ is split and the 
the pair $(Z_2,Z)$ is quasi-split. In Propositions~\ref{prop-split} and \ref{prop-quasisplit} below, a wider class of examples
generalizing these are given.

Note also given a split pair $(Z_0,Z)$ (resp. a quasi-split pair $(W_0,W)$) the space $Q(Z_0,Z)$ (resp. $Q_r(W_0,W)$) is not unique, but all such spaces are isomorphic to the quotient TVS $\mathcal{O}(Z_0)/\mathcal{O}(Z)|_{Z_0}$
(resp. to   $\mathcal{O}(W_0)/\overline{\mathcal{O}(W)|_{W_0}}$.)

The main result of this paper is the following:
\begin{thm}\label{thm-split} Assume that the pair $(X_0,X)$ is split. Then we have an  isomorphism of TVS:
\begin{equation}\label{eq-formula}
 H^{0,1}(\h)\cong \frac{Q(X_0,X)\csor \mathcal{O}(Y_0)\phantom{|_{X_0\times Y_0}}}{\left(Q(X_0,X)\csor\mathcal{O}(Y)\right)|_{X_0\times Y_0}}.
\end{equation}
Consequently:
\begin{enumerate}
\item If $(Y_0,Y)$ is Runge,  $H^{0,1}(\h)$ is an indiscrete topological vector space of uncountable dimension. (cf.  Proposition~\ref{prop-nonhaus} above.)
\item If $(Y_0,Y)$ is also split, then $H^{0,1}(\h)$ is Hausdorff and infinite-dimensional, and is isomorphic to ${Q}(X_0,X)\csor{Q}(Y_0,Y)$. 
\item If $(Y_0,Y)$ is quasi-split, then $H^{0,1}(\h)$ is non-Hausdorff, but  not indiscrete. There is a topological direct sum decomposition into  subspaces
\begin{equation}\label{eq-h01quasisplit}
H^{0,1}(\h)= H^{0,1}_{\rm ind}(\h)\oplus H^{0,1}_{\rm red}(\h)
\end{equation}
where $H^{0,1}_{\rm ind}(\h)$ is an indiscrete topological vector space of uncountable dimension, and $H^{0,1}_{\rm red}(\h)$ is a non-zero Hausdorff topological vector space, and are given explicitly as:
\begin{equation}
\label{eq-h31}H^{0,1}_{\rm red}(\h) \cong {Q}(X_0,X)\csor{Q}_{r}(Y_0,Y)
\end{equation}
and 
\begin{equation}\label{eq-h32}
 H^{0,1}_{\rm ind}(\h)\cong
\frac{Q(X_0,X)\csor\overline{\mathcal{O}(Y)|_{Y_0}} 
}{\left(Q(X_0,X)\csor \mathcal{O}(Y)\right)|_{X_0\times Y_0}}.
\end{equation}
\end{enumerate}
\end{thm}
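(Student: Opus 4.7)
The plan is to combine a Mayer-Vietoris argument with the identification $\mathcal{O}(A \times B) \cong \mathcal{O}(A) \csor \mathcal{O}(B)$ for Stein $A, B$. Cover $\h$ by the two Stein opens $U_1 = X \times Y_0$ and $U_2 = Y \times X_0$; their intersection $U_1 \cap U_2 = X_0 \times Y_0$ is also Stein. By Cartan's Theorem B applied to the structure sheaf, the Čech-to-sheaf spectral sequence for this two-element cover collapses to a four-term topologically exact sequence
\begin{equation*}
0 \to \mathcal{O}(\h) \to \mathcal{O}(U_1) \oplus \mathcal{O}(U_2) \xrightarrow{\,\rho\,} \mathcal{O}(X_0 \times Y_0) \to H^{0,1}(\h) \to 0,
\end{equation*}
with $\rho(f,g) = (f-g)|_{X_0 \times Y_0}$. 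Because $\dbar$ has closed range on each Stein piece, this is a topological isomorphism of TVS, so
\begin{equation*}
H^{0,1}(\h) \cong \frac{\mathcal{O}(X_0 \times Y_0)}{\mathcal{O}(X \times Y_0)|_{X_0 \times Y_0} + \mathcal{O}(Y \times X_0)|_{X_0 \times Y_0}}.
\end{equation*}

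Next I would invoke nuclearity of the Fréchet spaces of holomorphic functions to rewrite this quotient as a tensor-product expression. The split hypothesis on $(X_0,X)$ gives a topological direct sum $\mathcal{O}(X_0) = \mathcal{O}(X)|_{X_0} \oplus Q(X_0, X)$, which tensors up to
\begin{equation*}
\mathcal{O}(X_0 \times Y_0) = \bigl(\mathcal{O}(X)|_{X_0} \csor \mathcal{O}(Y_0)\bigr) \oplus \bigl(Q(X_0,X) \csor \mathcal{O}(Y_0)\bigr).
\end{equation*}
The first summand is precisely $\mathcal{O}(X \times Y_0)|_{X_0 \times Y_0}$, so the quotient kills it and leaves $Q(X_0, X) \csor \mathcal{O}(Y_0)$. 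Inside this summand, the image of $\mathcal{O}(Y \times X_0)|_{X_0 \times Y_0} = (\mathcal{O}(X_0) \csor \mathcal{O}(Y))|_{X_0 \times Y_0}$ is $(Q(X_0, X) \csor \mathcal{O}(Y))|_{X_0 \times Y_0}$ (the $\mathcal{O}(X)|_{X_0}$-component vanishes modulo the first direct summand). This yields formula~\eqref{eq-formula}.

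For the three consequences I would decompose $\mathcal{O}(Y_0)$ analogously. In case (1), the Runge hypothesis makes $\mathcal{O}(Y)|_{Y_0}$ dense in $\mathcal{O}(Y_0)$, hence $(Q(X_0,X) \csor \mathcal{O}(Y))|_{X_0 \times Y_0}$ is dense in $Q(X_0,X) \csor \mathcal{O}(Y_0)$, so the quotient is indiscrete; the dimension count then follows from Proposition~\ref{prop-nonhaus}. In case (2), the split decomposition of $\mathcal{O}(Y_0)$ produces a topological direct sum of the numerator mirroring what we did for $X$, and the denominator coincides exactly with $Q(X_0,X) \csor \mathcal{O}(Y)|_{Y_0}$, leaving the Hausdorff Fréchet quotient $Q(X_0,X) \csor Q(Y_0,Y)$. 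Case (3) combines both: the quasi-split decomposition $\mathcal{O}(Y_0) = \overline{\mathcal{O}(Y)|_{Y_0}} \oplus Q_r(Y_0,Y)$ splits $Q(X_0,X) \csor \mathcal{O}(Y_0)$ into a closed summand in which the denominator sits densely (producing the indiscrete piece \eqref{eq-h32}) and a complement $Q(X_0,X) \csor Q_r(Y_0,Y)$ on which the denominator acts trivially (producing the Hausdorff piece \eqref{eq-h31}); invoking the canonical indiscrete/Hausdorff decomposition of a TVS then yields \eqref{eq-h01quasisplit}.

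The main obstacle is ensuring that every isomorphism above is \emph{topological}, not merely algebraic. This reduces to three standard-but-delicate nuclear Fréchet facts: (a) $(E_1 \oplus E_2) \csor F = (E_1 \csor F) \oplus (E_2 \csor F)$ for closed complemented $E_i$; (b) restriction onto a split subspace is open, by the open mapping theorem, using that $\mathcal{O}(X)|_{X_0}$ is closed in $\mathcal{O}(X_0)$; and (c) density is preserved under $\csor$ with a nuclear Fréchet factor. The Mayer-Vietoris identification itself additionally requires that the coboundary map in the Čech complex is a topological surjection onto $H^{0,1}(\h)$, which is exactly the closed-range theorem for $\dbar$ on the Stein opens $U_1, U_2$.
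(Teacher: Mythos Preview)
Your proposal is correct and follows essentially the same route as the paper. The paper uses the identical two-element Leray cover and \v{C}ech identification (what you call Mayer--Vietoris), performs the same split decomposition of $\mathcal{O}(X_0\times Y_0)$ and $\mathcal{O}(X_0\times Y)$ to obtain \eqref{eq-formula}, and treats the three consequences by exactly the tensoring-with-$\mathcal{O}(Y_0)$ argument you outline; for the topological nature of the isomorphisms it simply cites Laufer's result (Result~\ref{res-lauf}) that the Dolbeault--Leray isomorphisms are TVS homeomorphisms, rather than revisiting the closed-range analysis you flag at the end.
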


In Section~\ref{sec-examples} below, we use these results to compute the cohomologies of several elementary Reinhardt domains in $\cx^2$. 

\subsection{Acknowledgments}The author gratefully acknowledges the  inspiration and encouragement of
Christine Laurent-Thiébaut and  Mei-Chi Shaw.  Also, many thanks to Franc Forstneric and Bo Berndtsson for interesting 
conversations and comments.

\section{Preliminaries}
\subsection{Natural topology on the cohomology}We collect here a few facts which we will use (see \cite{GuRo}) . Let $\Omega^p$ denote the sheaf of germs of holomorphic $p$-forms on a complex manifold $M$. By  {\em Dolbeault's  theorem} we can naturally identify the Sheaf cohomology group $H^q(M, \Omega^p)$ with the Dolbeault group $H^{p,q}(M)$.  Further, if $\mathfrak{U}$ is an open cover of $M$ by countably many Stein open subsets, by Leray's theorem, there is a natural isomorphism of the \v{C}ech cohomology group $H^q(\mathfrak{U},\Omega^p)$ with the cohomology group $H^q(M, \Omega^p)$. 
Each of the three isomorphic cohomology groups $H^{p,q}(M), H^q(M, \Omega^p)$ and $H^q(\mathfrak{U},\Omega^p)$ is a topological vector space in a
natural way.  Recall that for the Dolbeault group $H^{p,q}(M)$ this topology arises as the quotient topology of the Fréchet topology of the space  $\mathcal{Z}^{p,q}_{\dbar}(M)$ 
of $\dbar$-closed $(p,q)$-forms by the subspace $\dbar(\mathcal{A}^{p,q-1}(M))$ of exact forms. Note also that the space of \v{C}ech cochains
$C^q(\mathfrak{U}, \Omega^p)$ with respect to the cover $\mathfrak{U}$ is the direct sum of Fréchet spaces, and therefore carries a natural topology. 
Denoting by $Z^q(\mathfrak{U}, \Omega^p)$  the subspace of cocycles, the \v{C}ech group $H^q(\mathfrak{U},\Omega^p)$ is the quotient $Z^q(\mathfrak{U}, \Omega^p)/\delta(C^{q-1}(\mathfrak{U},\Omega^p)$, and therefore carries a natural quotient topology. Finally, 
the group $H^q(M, \Omega^p)$ is the direct limit (with respect to refinement of covers) of the \v{C}ech groups $H^q(\mathfrak{U},\Omega^p)$, and therefore
can be endowed with the strong topology, i.e. the finest topology with respect to which the natural map $H^q(\mathfrak{U},\Omega^p) \hookrightarrow H^q(M, \Omega^p)$ is  continuous for each open cover $\mathfrak{U}$ of $M$.

It is a fundamental fact that these natural topologies are essentially the same. More precisely we have the following:
\begin{result}[{\cite[Section 2]{laufer}}] \label{res-lauf}With respect to the
 natural topologies, described above, the Dolbeault and Leray isomorphisms are linear homeomorphisms of topological vector spaces.
\end{result}

This means, in particular, that the decomposition into indiscrete and reduced parts has analogs for the \v{C}ech cohomology, and 
further the Dolbeault and Leray isomorphisms respect this decomposition. For details, see \cite{cassa}.
\subsection{Laufer and Siu Theorems} We now recall an important general fact regarding Dolbeault groups of domains in Stein 
manifolds, which will allow us to conclude that certain topological vector spaces are of uncountable dimension by knowing that  these spaces are non-zero.
\begin{result}[{\cite{laufer_infinite, Siu_uncountable}}]\label{res-laufersiu}
Let $\Omega$ be a non-empty domain in a Stein manifold of positive dimension. Then 
the cohomology group $H^{p,q}(\Omega)$ is either zero or uncountably infinite dimension. If $H^{p,q}(\Omega)=
H^{p,q}_{\rm red}(\Omega)\oplus H^{p,q}_{\rm ind}(\Omega)$ is the decomposition of the cohomology into its 
reduced (Hausdorff) and indiscrete part, then each of these parts is either zero or of uncountably infinite dimension. 
\end{result}

\subsection{Tensor products} For the general theory of nuclear spaces and topological tensor products we refer to \cite{trevesbook}. However, in our application, the tensor products used will be of a very simple type and can be easily 
described in elementary terms. Let $X,Y$ be complex manifolds, and let $\mathcal{Q}\subset \mathcal{O}(X)$ and
$\mathcal{R}\subset\mathcal{O}(Y)$ be linear subspaces. The {\em algebraic tensor product} $\mathcal{Q}\tensor\mathcal{R}$ is the linear span of the functions $f\tensor g$ on $X\times Y$, where by definition
$(f\tensor g)(z,w)=f(z)g(w)$ for $z\in X,w\in Y$.  The topological tensor product $\mathcal{Q}\csor\mathcal{R}$, is the closure of
the algebraic tensor product $\mathcal{Q}\tensor \mathcal{R}$ in the space $\mathcal{O}(X\times Y)$ (this is equivalent 
to the abstract definition in \cite{trevesbook} in this particular case.)  Therefore,
whatever $\mathcal{Q},\mathcal{R}$ might be, $\mathcal{Q}\csor\mathcal{R}$ is a Fréchet space.

\section{Proofs}
\subsection{Proof of Proposition~\ref{prop-nonhaus} and its Corollary~\ref{cor-thm1}} 
\begin{lem}\label{lem-nonstein} The manifold $\h$  in \eqref{eq-ghf} is not Stein. This result holds even if the complex manifolds $X,X_0, Y,Y_0$  are not assumed to be Stein, all we need is that $X_0\subsetneq X$ and $Y_0\subsetneq Y$ are 
proper domains.
\end{lem}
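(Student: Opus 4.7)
The plan is to exhibit a Hartogs-type extension of holomorphic functions on $\h$ across a boundary point in $X \times Y$, violating the local pseudoconvexity that any Stein subdomain of a complex manifold must satisfy. First, I would produce the relevant boundary point: since $X_0\subsetneq X$ and $X$ is connected, the topological boundary $\partial X_0\cap X$ is nonempty, so pick $p\in\partial X_0\cap X$, and analogously $q\in\partial Y_0\cap Y$. Then $(p,q)\in(X\times Y)\setminus\h$, but $(p,q)$ lies in $\overline{\h}$, being the limit of sequences $(p,q_n)\in X\times Y_0\subset\h$ (for any $q_n\to q$ in $Y_0$) and $(p_n,q)\in X_0\times Y\subset\h$ (for any $p_n\to p$ in $X_0$). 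In a coordinate chart around $(p,q)$ identified with an open polydisc $\Delta^n\times\Delta^m$ sending $(p,q)$ to the origin, $\h$ takes the local form $(\Delta^n\times V_0)\cup(U_0\times\Delta^m)$ for open sets $U_0,V_0$ with $0\in\partial U_0\cap\partial V_0$.

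Next, for any $f\in\mathcal{O}(\h)$ I would construct a local holomorphic extension of $f$ across the origin via a Bochner--Martinelli or Cauchy--Fantappi\'e integral
\[
\tilde{f}(x,y)=\int_{\Gamma}f(x,w)\,K(w,y),
\]
taken over a real cycle $\Gamma\subset V_0$ which, in a small neighborhood of $0$ in $\Delta^m$, is homologous to a sphere about $0$. Since $\Gamma\subset Y_0$, the integrand is well-defined for $x$ in a neighborhood of $p$ (because $(x,w)\in X\times Y_0\subset\h$ for $w\in\Gamma$), producing a holomorphic $\tilde{f}$ in a full neighborhood of the origin. An identity-principle argument on the overlap then gives $\tilde{f}=f$ on $\h$ near $(p,q)$, so the restriction map $\mathcal{O}(\h \cup U) \to \mathcal{O}(\h)$ is onto for some open $U\ni(p,q)$ strictly enlarging $\h$; this forces $\h$ to fail local pseudoconvexity at $(p,q)$, and hence to fail being Stein.

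The main obstacle is the existence of the cycle $\Gamma\subset V_0$ around $0$: since $V_0=Y_0\cap\Delta^m$ is an arbitrary open set with $0$ merely on its boundary, it may be ``one-sided'' at $0$ (e.g.\ a local half-space) and admit no encircling sphere. I plan to handle this by selecting $q$ more carefully --- choosing it to be a sufficiently regular boundary point of $Y_0$ where $V_0$ fills out an honest punctured neighborhood --- so that a valid cycle $\Gamma$ exists. Alternatively, one may iterate a slicewise Cauchy extension, using that $f(x,\cdot)\in\mathcal{O}(Y)$ for each $x\in X_0$ and $f(\cdot,y)\in\mathcal{O}(X)$ for each $y\in Y_0$, combining one-variable Hartogs-type extensions on suitable complex discs into the desired multivariate extension across $(p,q)$.
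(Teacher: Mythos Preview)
Your overall strategy---produce a boundary point $(p,q)\in\partial\h$ across which every $f\in\mathcal{O}(\h)$ extends, so $\h$ fails holomorphic convexity---is exactly the paper's. The gap is in the execution of the extension step. Your Bochner--Martinelli integral requires a cycle $\Gamma\subset V_0$ enclosing $0$, and as you yourself note, such a cycle need not exist when $0$ is merely a boundary point of $V_0$. Your first proposed fix, selecting $q$ so that $V_0$ contains a punctured neighborhood of $0$, does not work: for a generic domain $Y_0$ (e.g.\ a ball), \emph{no} boundary point has this property---locally $Y_0$ looks like a half-space, never a punctured neighborhood. The second fix (slicewise extension) contains a correct observation but is too vague to constitute a proof.

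The paper avoids the obstacle by a small but decisive change of viewpoint: instead of centering the coordinate chart at the boundary point $(p,q)$, center it at an \emph{interior} point $P\in X_0$ close to $\partial X_0$ (and similarly $Q\in Y_0$). Identifying a chart around $P$ with $\Delta^m$, let $\rho$ be the smallest radius with $\partial\Delta_\rho^m\cap\partial X_0\neq\emptyset$; then $\Delta_\rho^m\subset X_0$. Doing the same in $Y$ yields $\Delta_r^n\subset Y_0$. Now
\[
H \;=\; \bigl(\Delta^m\times\Delta_r^n\bigr)\cup\bigl(\Delta_\rho^m\times\Delta^n\bigr)\;\subset\;(X\times Y_0)\cup(X_0\times Y)\;=\;\h
\]
is a genuine polydisc Hartogs figure, so classical results (log-convexity of the envelope of holomorphy of a Reinhardt domain) give extension of $f|_H$ across $\partial\Delta_\rho^m\times\partial\Delta_r^n$. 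Choosing $P'\in\partial\Delta_\rho^m\cap\partial X_0$ and $Q'\in\partial\Delta_r^n\cap\partial Y_0$ then furnishes the desired point $(P',Q')\in\partial\h$. The moral: do not try to straighten $X_0$ near its boundary---instead inscribe a polydisc in $X_0$ that touches $\partial X_0$, which is always possible.
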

This is a consequence of the fact that if we think of $\h$ as a domain in $X\times Y$, there is a point on the boundary of
$\h$ to which each holomorphic function on $\h$ extends holomorphically. Indeed with more work, we can show that 
each holomorphic function on $\h$ extends to a neighborhood of $\partial X_0\times \partial Y_0$, but this will not be needed for our application.

\begin{proof}[Proof of Lemma~\ref{lem-nonstein}] Let $m=\dim X, n=\dim Y$.
We first consider the special case in which $X=\Delta^m, Y=\Delta^n$, and for some $0<\rho,r<1$, we have $X_0=\Delta_\rho^m, Y_0=\Delta_r^n$, where
$\Delta_\rho^m$ denotes the polydisc $\left\{\abs{z_j}<\rho \text{ for } j=1,\dots,m\right\}$ and $\Delta_r^n$ is similarly defined. Then the logarithmic image
\[\mathcal{L}_\h= \{(\log\abs{z_1},\dots, \log\abs{z_{m+n}})\in \rl^{m+n}\colon (z_1,\dots, z_{m+n})\in \h\}\]
is not convex, and consequently by classical results of function theory, the envelope of holomorphy of $\h$ is the smallest domain in $\Delta^{m+n}$ 
containing $\h$ whose logarithmic image is convex (see e.g. \cite[page  83 ff.] {GrauFri}.) In particular, it follows that each holomorphic function on $\h$ 
extends to a neighborhood of $\partial X_0\times \partial Y_0$.

In the general situation, let $P\in X_0$ be a point near the boundary $\partial X_0$ in the sense that there is a neighborhood $V$ of $P$ in $X$ which is biholomorphic to the polydisc $\Delta^m$ by a biholomorphic map which carries $P$ to 0, and $V$ has nonempty intersection with $\partial X_0$. Identifying
$V$ with $\Delta^m$, let $0<\rho<1$ be the smallest radius such that the intersection $\partial\Delta^m_\rho \cap \partial X_0$ is nonempty. Similarly,
let $W$ be a coordinate neighborhood on $Y$ biholomorphic to $\Delta^n$ centered at a point of $Y_0$, and let $r$ be the smallest radius for which 
the intersection $\partial\Delta^n_r \cap \partial Y_0$ is nonempty. Then define
\[ H=\Delta^m\times \Delta^n_r \cup \Delta^m_\rho \times \Delta^n \subset \h.\]
By the previous paragraph, each holomorphic function on $h$ extends to a neighborhood of $\partial\Delta_\rho^m \times \partial \Delta_r^n$. 
Choosing $P'\in \partial\Delta^m_\rho \cap \partial X_0$ and $Q'\in \partial\Delta^n_r \cap \partial Y_0$, we see that each holomorphic function on $\h$ extends to a neighborhood of $(P',Q')\in \partial \h$, so that $\h$ is not Stein.
\end{proof}

\begin{proof}[Proof of Proposition~\ref{prop-nonhaus}] Let  
\begin{equation}\label{eq-leraycovering}
 U_1 = X_0\times Y, \text{ and   } U_2= X\times Y_0
\end{equation}
Then $\mathfrak{U}=\{U_1,U_2\}$ is an open cover of
of $\h$ by Stein open sets, and is consequently by Leray's theorem, the \v{C}ech group $H^q(\mathfrak{U},\Omega^p)$ is naturally isomorphic to the 
sheaf cohomology group $H^q(\h, \Omega^p)$, where $\Omega^p$ denotes the sheaf of germs of holomorphic $p$-forms on $\h$.  Since the manifold $\h$ 
admits an open cover by two Stein domains, it follows that if $q>1$, then the the space of \v{C}ech cochains
$C^q(\mathfrak{U}, \Omega^p)$ vanishes, and consequently, we have $H^q(\mathfrak{U},\Omega^p)=0$. By the Dolbeault isomorphism theorem, we have 
that $H^{p,q}(\h)\cong H^q(\h, \Omega^p) \cong H^q(\mathfrak{U},\Omega^p)=0$. 

If it was true that $H^{p,1}(\h)=0$ for each $p$, then we will have $H^{p,q}(\h)=0$ for $q>0$. Therefore 
we would have that $\h$ is Stein, which contradicts Lemma~\ref{lem-nonstein}. Therefore, there is at least one $p$, 
for which $H^{p,1}(\h)\not =0$.

Denote by $U_{12}$ the intersection $U_1\cap U_2$, and recall that $\Omega^p$ is the sheaf of germs of holomorphic $p$-forms. By Result~\ref{res-lauf}, 
we have a linear homeomorphism:
\begin{align} H^{p,1}(\h)&\cong \frac{Z^1(\mathfrak{U}, \Omega^p)}{\delta(C^0(\mathfrak{U}, \Omega^p))}\nonumber\\
&=\frac{\Omega^p(U_{12})}{\Omega^p(U_1)|_{U_{12}}+ \Omega^p(U_2)|_{U_{12}}},\label{eq-cohom1}
\end{align}
where $\Omega^p(U_1)|_{U_{12}}$  means the space of  restrictions of  holomorphic $p$-forms on $U_1$ to the subset $U_{12}$ (and similarly for the 
other term in the denominator), and the sum of two subspaces of a vector space is as usual  the linear span of their union. Note that till this point we have not made any use of the hypothesis that one of the Stein pairs $(X_0,X)$ and $(Y_0,Y)$ is Runge.

By hypothesis at least one of the pairs
$(X_0,X)$ and $(Y_0,Y)$ is Runge. Let us say for definiteness that $(Y_0,Y)$ is Runge, i.e., $Y_0$ is a  proper Stein domain in the Stein manifold
$Y$ such that  $\mathcal{O}(Y)|_{Y_0}$ is dense in $\mathcal{O}(Y_0)$. It follows that the pair of Stein manifolds $(X_0\times Y_0, X_0\times Y)= (U_{12},U_1)$ 
is also Runge.  It now follows from a classical result of Stein theory  (see \cite[page 170]{GraRem}) that $\Omega^p(U_1)|_{U_{12}}$ is dense in $\Omega^p(U_{12})$ for each $p$. Consequently, the space of exact \v{C}ech cochains $\delta(C^0(\mathfrak{U}, \Omega^p))= {\Omega^p(U_1)|_{U_{12}}+ \Omega^p(U_2)|_{U_{12}}}$ is dense in the space of closed cochains $Z^1(\mathfrak{U}, \Omega^p)=\Omega^p(U_{12})$. For a particular $p$, we therefore have two possibilities:
\begin{enumerate}
\item ${\Omega^p(U_1)|_{U_{12}}+ \Omega^p(U_2)|_{U_{12}}}=\Omega^p(U_{12})$ so that $H^{p,1}(\h)=0$.
\item ${\Omega^p(U_1)|_{U_{12}}+ \Omega^p(U_2)|_{U_{12}}}\not=\Omega^p(U_{12})$  so that $H^{p,1}(\h)$ is a nonzero indiscrete topological vector space, and we have already shown that there is at least one $p$ for which this option must hold. An appeal to Result~\ref{res-laufersiu} completes the proof.
\end{enumerate}
\end{proof}
\begin{proof}[Proof of Corollary~\ref{cor-thm1}]Let $N=\dim \h$.  From the triviality of the tangent bundle of $\h$, we see that on $\h$, we have an isomorphism of sheaves 
$\displaystyle{ \Omega^p \cong \bigoplus_{j=1}^{ \binom{N}{p}}\Omega^0}$,
which gives rise to an isomorphism at the cohomology level
\begin{equation}\label{eq-hp1}
{ H^{p,1}(\h)\cong \bigoplus_{j=1}^{ \binom{N}{p}}H^{0,1}(\h).}
\end{equation}
There is a $p=p_0$ for which the left hand side is an indiscrete TVS of uncountable dimension. Since $H^{0,1}(\h)$ is linearly homeomorphic to a subspace of $H^{p_0,1}(\h)$, it follows from \eqref{eq-hp1}  that $H^{0,1}(\h)$ is also an indiscrete TVS of uncountable dimension. For for any $p$ with $1\leq p \leq N$, the result follows from \eqref{eq-hp1},
since the direct sum of indiscrete TVS is clearly indiscrete.
\end{proof}

\subsection{Proof of Theorem~\ref{thm-split}} \label{sec-proof2}Recall from \eqref{eq-leraycovering} that the two-element Leray cover $\mathfrak{U}$  that we are using to compute 
cohomologies consists of $U_1=X_0\times Y$, and $U_2=X\times Y_0$. We also have $U_{12}=X_0\times Y_0$.

 By hypothesis, the pair $(X_0,X)$ is split. It is easy to see that this implies that the pairs $(U_{12},U_2)= (X_0\times Y_0, 
 X\times Y_0)$ and $(U_1,X\times Y)=(X_0\times Y, X\times Y)$ are also split, are if $Q(X_0,X)$ is a complement of 
 $\mathcal{O}(X)|_{X_0}$ in $\mathcal{O}(X_0)$, then we have:
 
 \begin{equation}\label{eq-u12} \mathcal{O}(U_{12})= \mathcal{O}(U_2)|_{U_{12}}\oplus Q_0(X_0,X)\csor \mathcal{O}(Y_0).\end{equation}
 Similarly, we have
 \begin{equation}\label{eq-u1} \mathcal{O}(U_{1})= \mathcal{O}(X\times Y)|_{U_{1}}\oplus Q_0(X_0,X)\csor \mathcal{O}(Y).\end{equation}
 Restricting each side of \eqref{eq-u1} to the subset $U_{12}\subset U_1$, we obtain
 \begin{equation}\mathcal{O}(U_{1})|_{U_{12}}= \mathcal{O}(X\times Y)|_{U_{12}}\oplus \left(Q_0(X_0,X)\csor \mathcal{O}(Y)\right)|_{U_{12}}\label{eq-ou1u12}
 \end{equation}
We analyze the relation of the terms of \eqref{eq-ou1u12} with $\mathcal{O}(U_2)|_{U_{12}}$. Observe that we have
 \begin{align}
 \left(Q(X_0,X)\csor \mathcal{O}(Y)\right)|_{U_{12}}\cap \mathcal{O}(U_2)|_{U_{12}}&=
 \left(Q(X_0,X)\csor \mathcal{O}(Y)\right)|_{U_{12}} \cap \mathcal{O}(X\times Y_0)|_{X_0\times Y_0}\nonumber\\
  &\subseteq  Q(X_0,X)\csor \mathcal{O}(Y_0)\cap \mathcal{O}(X)|_{X_0}\csor \mathcal{O}(Y_0)\nonumber\\
  &\subseteq \left(Q(X_0,X)\cap \mathcal{O}(X)|_{X_0}\right)\csor \mathcal{O}(Y_0)
  \nonumber\\
  &=\{0\}\csor \mathcal{O}(Y_0)\nonumber\\
  &=\{0\}.\label{eq-direct1}
 \end{align}
Also, we have
\begin{align} \mathcal{O}(X\times Y)|_{U_{12}}
&\subseteq \mathcal{O}(X)|_{X_0}\csor \mathcal{O}(Y_0)\nonumber\\
&=\mathcal{O}(U_2)|_{U_{12}}.\label{eq-direct2}
\end{align}
Using \eqref{eq-ou1u12}, \eqref{eq-direct1} and \eqref{eq-direct2}, we can compute
\begin{align}
\mathcal{O}(U_1)|_{U_{12}}+ \mathcal{O}(U_2)|_{U_{12}}
&= \left(\mathcal{O}(X\times Y)|_{U_{12}}\oplus \left(Q(X_0,X)\csor \mathcal{O}(Y)\right)|_{U_{12}}\right)+
 \mathcal{O}(U_2)|_{U_{12}}\nonumber\\
 &=\left(Q(X_0,X)\csor \mathcal{O}(Y)\right)|_{U_{12}}+ \mathcal{O}(U_2)|_{U_{12}}\label{eq-absorb}\\
 &=\left(Q(X_0,X)\csor \mathcal{O}(Y)\right)|_{U_{12}}\oplus \mathcal{O}(U_2)|_{U_{12}},\label{eq-disjoint}
 \end{align}
where in \eqref{eq-absorb} we use \eqref{eq-direct2} to absorb the term $\mathcal{O}(X\times Y)|_{U_{12}}$
in $\mathcal{O}(U_2)|_{U_{12}}$, and in \eqref{eq-disjoint}, we use \eqref{eq-direct1} to conclude that the sum is actually a direct sum of closed subspaces. Therefore we have
\begin{align*}
H^{0,1}(\h)
&\cong 
\frac{\mathcal{O}(U)|_{U_{12}}}{\mathcal{O}(U_1)|_{U_{12}}+\mathcal{O}(U_2)|_{U_{12}}}& \text{ (cf. \eqref{eq-cohom1})}\\
&=\frac{ Q(X_0,X)\csor \mathcal{O}(Y_0)\oplus \mathcal{O}(U_2)|_{U_{12}}}{\left(Q(X_0,X)\csor \mathcal{O}(Y)\right)|_{U_{12}}\oplus \mathcal{O}(U_2)|_{U_{12}}} &\text{using \eqref{eq-u12} and \eqref{eq-disjoint} }\\
&\cong \frac{ Q(X_0,X)\csor \mathcal{O}(Y_0)}{\left(Q(X_0,X)\csor \mathcal{O}(Y)\right)|_{U_{12}}},
\end{align*}
Which establishes \eqref{eq-formula}, since $U_{12}=X_0\times Y_0$. We now consider what happens under various further hypotheses on the pair $(Y_0,Y)$:

(1)  First assume that $(Y_0,Y)$ is Runge. We claim that in this case $\left(Q(X_0,X)\csor \mathcal{O}(Y)\right)|_{X_0\times Y_0}$ is dense in $Q(X_0,X)\csor \mathcal{O}(Y_0)$ so that $H^{0,1}(\h)$ is an indiscrete space of infinite dimension. Since the algebraic tensor product $Q(X_0,X)\tensor \mathcal{O}(Y_0)$ is dense in $Q(X_0,X)\csor \mathcal{O}(Y_0)$, it suffices to see that the algebraic tensor product $Q(X_0,X)\tensor \mathcal{O}(Y)|_{Y_0}$ is dense in  $Q(X_0,X)\tensor \mathcal{O}(Y_0)$. But this is obvious since $\mathcal{O}(Y)|_{Y_0}$ is dense by hypothesis in $\mathcal{O}(Y_0)$.

(2) Now assume that $(Y_0,Y)$ is split, and  we have a direct sum decomposition into closed subspaces
\[ \mathcal{O}(Y_0)= \mathcal{O}(Y)|_{Y_0}\oplus Q(Y_0,Y).\]
Also, we have
\[ \left( Q(X_0,X)\csor \mathcal{O}(Y)\right)|_{X_0\times Y_0}= Q(X_0,X)\csor \mathcal{O}(Y)|_{Y_0}.\]
Therefore
\begin{align*}H^{0,1}(\h)&\cong \frac{ Q(X_0,X)\csor \mathcal{O}(Y_0)}{\left(Q(X_0,X)\csor \mathcal{O}(Y)\right)|_{U_{12}}}\\
&= \frac{ Q(X_0,X)\csor \left(\mathcal{O}(Y)|_{Y_0}\oplus Q(Y_0,Y)\right)}{\left(Q(X_0,X)\csor \mathcal{O}(Y)\right)|_{X_0\times Y_0}}\\
&=\frac{ Q(X_0,X)\csor \mathcal{O}(Y)|_{Y_0}\oplus Q(X_0,X)\csor Q(Y_0,Y)}{Q(X_0,X)\csor \mathcal{O}(Y)|_{Y_0}}\\
&= Q(X_0,X)\csor Q(Y_0,Y)
\end{align*}
which completes the proof in this case. Note that  since $Q(X_0,X)\cong \mathcal{Q}(X_0,X)$  and $Q(Y_0,Y)\cong \mathcal{Q}(Y_0,Y)$, it follows we have the invariant representation
\begin{equation}\label{eq-split2invariant}
 H^{0,1}(\h)\cong  \mathcal{Q}(X_0,X)\csor\mathcal{Q}(Y_0,Y).
\end{equation}
(3)  Now suppose that the pair $(Y_0,Y)$ is quasi-split, and we have a direct sum decomposition
\[ \mathcal{O}(Y_0)= \overline{\mathcal{O}(Y)|_{Y_0}} \oplus Q_r(Y_0,Y).\]
Therefore, using \eqref{eq-formula}, we obtain 
\begin{align*} 
H^{0,1}(\h)&\cong  \frac{ Q(X_0,X)\csor \mathcal{O}(Y_0)}{\left(Q(X_0,X)\csor \mathcal{O}(Y)\right)|_{X_0\times Y_0}}\\
&=\frac{ Q(X_0,X)\csor\left( \overline{\mathcal{O}(Y)|_{Y_0}} \oplus Q_r(Y_0,Y)\right)}{\left(Q(X_0,X)\csor \mathcal{O}(Y)\right)|_{X_0\times Y_0}}\\
&=\frac{Q(X_0,X)\csor\overline{\mathcal{O}(Y)|_{Y_0}} 
\oplus Q(X_0,X)\csor Q_r(Y_0,Y)}{\left(Q(X_0,X)\csor \mathcal{O}(Y)\right)|_{X_0\times Y_0}}\\
&= \frac{Q(X_0,X)\csor\overline{\mathcal{O}(Y)|_{Y_0}} 
}{\left(Q(X_0,X)\csor \mathcal{O}(Y)\right)|_{X_0\times Y_0}}\oplus Q(X_0,X)\csor Q_r(Y_0,Y).
\end{align*}
Note that the first term of this direct sum decomposition is the quotient of the Fréchet space $Q(X_0,X)\csor\overline{\mathcal{O}(Y)|_{Y_0}}$ by a dense subspace, and hence is indiscrete, and the second term is the tensor product of two  subspaces of Fréchet spaces, and hence a Hausdorff (indeed a Fréchet) topological vector space. 
The expressions \eqref{eq-h01quasisplit}, \eqref{eq-h31} and \eqref{eq-h32} now follow.

\section{Split and Quasi-split pairs}
In this section we give some simple sufficient conditions guaranteeing that a pair is split or quasi-split.
\begin{prop}\label{prop-split}(a) Let $X\subset\cx^n$ be a bounded star-shaped domain, and   $X_0=U\cap X$, where $U$ is a  neighborhood
in $\cx^n$ of  the \v{S}ilov boundary of $X$. Then the image of the restriction map $\mathcal{O}(X)\to \mathcal{O}(X_0)$ is closed.

(b) Assume further in (a) that $X$ is complete Reinhardt and  $X_0$ is Reinhardt. Then the pair $(X_0,X)$ is split.
\end{prop}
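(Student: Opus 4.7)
Throughout, assume after translation that $0\in X$ is a center of the star-shape, and write $\partial_S X$ for the Šilov boundary of the uniform algebra $A(X)=\mathcal O(X)\cap C(\overline X)$.

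\emph{Part (a).} The plan is to show that the restriction $\mathcal{R}\colon\mathcal O(X)\to\mathcal O(X_0)$ is a topological embedding; closedness of its image in $\mathcal O(X_0)$ then follows by completeness of $\mathcal O(X)$. Fix a compact $K\subset X$. Since $X$ is star-shaped with respect to $0$, there exists $\rho_0<1$ with $K\subset\rho_0\overline X$; since $\partial_S X$ is compact and $U\supset\partial_S X$ is open, there exists $\rho_1<1$ such that $\rho\cdot\partial_S X\subset U\cap X=X_0$ for every $\rho\in[\rho_1,1)$. Set $\rho=\max(\rho_0,\rho_1)$ and $L=\rho\cdot\partial_S X$, a compact subset of $X_0$. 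For $f\in\mathcal O(X)$, the dilate $f_\rho(z)=f(\rho z)$ is holomorphic on $(1/\rho)X\supset\overline X$, so $f_\rho\in A(X)$, and the defining property of $\partial_S X$ yields
\[\sup_K|f|\le\sup_{\rho\overline X}|f|=\sup_{\overline X}|f_\rho|=\sup_{\partial_S X}|f_\rho|=\sup_L|f|.\]
With $L\subset X_0$ compact depending on $K$, this shows $\mathcal{R}$ is a topological embedding, hence has closed image.

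\emph{Part (b).} I will combine (a) with the Reinhardt/torus structure. Let $A:=\{\alpha\in\Z^n:z^\alpha\in\mathcal O(X_0)\}$, which contains $\N^n$ because $X_0\subset X$ and $X$ is complete Reinhardt. Every $f\in\mathcal O(X_0)$ has a Laurent expansion $f=\sum_{\alpha\in A}c_\alpha(f)z^\alpha$ converging absolutely and uniformly on compact subsets of $X_0$, and each coefficient functional $c_\alpha\colon\mathcal O(X_0)\to\cx$ is continuous (a Cauchy contour integral over a fixed torus $T_{r^\ast}\subset X_0$ with $r^\ast$ in the interior of the modulus image of $X_0$). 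Define the closed subspaces
\[V_+=\overline{\mathrm{span}}\{z^\alpha:\alpha\in\N^n\},\qquad V_-=\overline{\mathrm{span}}\{z^\alpha:\alpha\in A\setminus\N^n\}\]
of $\mathcal O(X_0)$. The plan is to verify:
\begin{enumerate}
\item $V_+=\mathcal O(X)|_{X_0}$: for $f\in\mathcal O(X)$, the Taylor expansion of $f$ at $0$ is supported on $\N^n$ and converges in $\mathcal O(X)$, so its partial sums restrict to approximations of $f|_{X_0}$ in $\mathcal O(X_0)$, giving $\mathcal O(X)|_{X_0}\subseteq V_+$; conversely, each $z^\alpha$ ($\alpha\in\N^n$) lies in $\mathcal O(X)|_{X_0}$, which is closed in $\mathcal O(X_0)$ by (a), so its closed span $V_+$ also lies there.
\item $V_-=\bigcap_{\alpha\in\N^n}\ker c_\alpha$, so $V_-$ is closed.
\item $V_+\cap V_-=\{0\}$ by uniqueness of Laurent coefficients.
\item $V_++V_-=\mathcal O(X_0)$: for $f\in\mathcal O(X_0)$, the two truncations $\sum_{\alpha\in\N^n}c_\alpha(f)z^\alpha$ and $\sum_{\alpha\in A\setminus\N^n}c_\alpha(f)z^\alpha$ each converge in $\mathcal O(X_0)$ by absolute convergence of the full Laurent series on compacts, and sum to $f$.
\end{enumerate}
These yield an algebraic direct sum $\mathcal O(X_0)=V_+\oplus V_-$ of closed subspaces of the Fréchet space $\mathcal O(X_0)$. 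The open mapping theorem applied to the continuous linear bijection $V_+\times V_-\to\mathcal O(X_0),\ (u,v)\mapsto u+v$ of Fréchet spaces upgrades this to a topological direct sum, so $(X_0,X)$ is split with $Q(X_0,X)=V_-$.

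\emph{Main obstacle.} The crucial step is the identification $V_+=\mathcal O(X)|_{X_0}$ in item (1). A priori a formal $\N^n$-supported Laurent series converging on $X_0$ need not extend holomorphically to $X$, so the inclusion $V_+\subseteq\mathcal O(X)|_{X_0}$ is genuinely nontrivial; it is precisely the closedness of $\mathcal O(X)|_{X_0}$ supplied by part (a) that provides it. After that, (b) reduces to formal manipulations with continuous coefficient functionals and the open mapping theorem. In (a), the main care is in coordinating the two constraints on $\rho$ so that a single $\rho$ both covers $K$ and pushes $\partial_S X$ into $U$.
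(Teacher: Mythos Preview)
Your proof is correct and follows essentially the same approach as the paper: part~(a) via dilation and the \v{S}ilov boundary maximum principle to obtain the sup-norm estimate $\sup_K|f|\le\sup_L|f|$ with $L\subset X_0$, and part~(b) via the Laurent splitting into $\N^n$-supported and non-$\N^n$-supported parts. Your version is somewhat more careful than the paper's---you explicitly arrange $\rho$ so that $\rho\cdot\partial_S X\subset X_0$, and you invoke part~(a) through closedness of $\mathcal O(X)|_{X_0}$ (so that the closed span $V_+$ of monomials lies inside it) rather than by applying the estimate directly to the partial sums of $f_1$, but these are equivalent uses of the same ingredient.
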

\begin{proof}
It is easy to see that the conclusion of part (a) of the proposition is equivalent to the following: for each compact $K\subset X$, there is a 
$C_K>0$ and a compact $K_0\subset X_0$ such that for each $f\in \mathcal{O}(X)$, we have 
\begin{equation}\label{eq-crestimate}
\norm{f}_K \leq C_K \norm{f}_{K_0}, \end{equation}
where $\norm{\cdot}_K, \norm{\cdot}_{K_0}$ denote the sup norm on $K, K_0$. Let $\Sigma$ be the \v{S}ilov boundary of $X$, and without loss of generality assume that $X$ is star-shaped with respect to the origin.  Let 
$K$ be a compact subset of $X$. Then there is a $\lambda$ with $0<\lambda <1$ such that $K$ is still compactly contained in $\lambda X$. Note that  we have $\lambda \Sigma \cap K=\emptyset$, and setting $K_0=\lambda\Sigma$, we see that $K_0$ is the \v{S}ilov boundary of $\lambda X$. Therefore, we have for each $f\in \mathcal{O}(X)$ that 
$\norm{f}_{K} \leq \norm{f}_{K_0}$, which proves part (a).

For part (b), let $A=\{\alpha\in \mathbb{Z}^n| z^\alpha\in \mathcal{O}(X_0)\}$. Then any $f\in \mathcal{O}(X_0)$
has a Laurent expansion $f(z)= \sum_{\alpha\in A}c_\alpha(f)z^\alpha$, where $c_\alpha(f)\in \cx$. We can write
\begin{align*} f(z)&=\sum_{\alpha\in A}c_\alpha(f)z^\alpha \\&= \sum_{\alpha\in \mathbb{N}^n \cap A}c_\alpha(f)z^\alpha+
\sum_{\alpha\in A \setminus\mathbb{N}^n}c_\alpha(f)z^\alpha\\
&=f_1(z)+f_2(z).\end{align*}
From estimate \eqref{eq-crestimate} it follows that $f_1$ extends to a holomorphic function in $\mathcal{O}(X)$.  If we 
consider the closed subspace of $\mathcal{O}(X_0)$ given by
\[ Q(X_0,X)= \{f\in \mathcal{O}(X_0)| c_\alpha(f)=0 \text{ if } \alpha\in \mathbb{N}^n\},\]
then clearly $f_2\in Q(X_0,X)$, so that we obtain a direct sum decomposition into closed subspaces
\[ \mathcal{O}(X_0)= \mathcal{O}(X)|_{X_0} \oplus Q(X_0,X),\]
which shows that $(X_0,X)$ is split.
\end{proof}
\begin{prop}\label{prop-quasisplit} Suppose that $X_0,X_1$ are Stein domains in a Stein manifold $X$ such that 
$X_0\subset X_1$,  the pair $(X_0,X_1)$ is split and the pair $(X_1,X)$ is Runge. Then the pair $(X_0,X)$ is quasi-split.
\end{prop}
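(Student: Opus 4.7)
The plan is to take $Q_r(X_0,X) := Q(X_0,X_1)$, the closed complement supplied by the split pair $(X_0,X_1)$, and to show that the entire problem reduces to verifying the identification
\[
\overline{\mathcal{O}(X)|_{X_0}} = \mathcal{O}(X_1)|_{X_0}.
\]
Once this is established, the split decomposition $\mathcal{O}(X_0) = \mathcal{O}(X_1)|_{X_0} \oplus Q(X_0,X_1)$ immediately becomes the required decomposition $\mathcal{O}(X_0) = \overline{\mathcal{O}(X)|_{X_0}} \oplus Q_r(X_0,X)$ of \eqref{eq-complement2}, and the closedness of $Q_r$ is inherited from that of $Q(X_0,X_1)$.

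The identification goes via two inclusions. For $\subseteq$, the factorization $F|_{X_0} = (F|_{X_1})|_{X_0}$ valid for every $F \in \mathcal{O}(X)$ shows $\mathcal{O}(X)|_{X_0} \subseteq \mathcal{O}(X_1)|_{X_0}$, and the right-hand side is closed in $\mathcal{O}(X_0)$ by the split hypothesis, so the closure of the left-hand side is still contained in it. For $\supseteq$, I would take any $G \in \mathcal{O}(X_1)$ and invoke the Runge hypothesis on $(X_1,X)$ to produce a sequence $F_n \in \mathcal{O}(X)$ with $F_n|_{X_1} \to G$ in $\mathcal{O}(X_1)$; continuity of the restriction map $\mathcal{O}(X_1) \to \mathcal{O}(X_0)$ then yields $F_n|_{X_0} \to G|_{X_0}$ in $\mathcal{O}(X_0)$, placing $G|_{X_0}$ in $\overline{\mathcal{O}(X)|_{X_0}}$.

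What remains is to verify the two ``negative'' conditions in the definition of quasi-split. Non-density is immediate: $\overline{\mathcal{O}(X)|_{X_0}} = \mathcal{O}(X_1)|_{X_0}$ is a proper subspace of $\mathcal{O}(X_0)$, since the split pair $(X_0,X_1)$ is assumed nontrivial (equivalently, $Q(X_0,X_1) \neq \{0\}$). The main obstacle is non-closedness, which requires the strict inclusion $\mathcal{O}(X)|_{X_0} \subsetneq \mathcal{O}(X_1)|_{X_0}$. I would handle this by unique continuation on the (connected) Stein manifold $X_1$: equality would force every $G \in \mathcal{O}(X_1)$ to agree on $X_0$ with some $F|_{X_1}$ for $F \in \mathcal{O}(X)$, hence on all of $X_1$, collapsing the Runge pair $(X_1,X)$ to the degenerate case $\mathcal{O}(X)|_{X_1} = \mathcal{O}(X_1)$; this is excluded by the implicit non-triviality of the Runge pair. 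Granting this, the three requirements of quasi-split are met with $Q_r(X_0,X) = Q(X_0,X_1)$.
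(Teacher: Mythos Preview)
Your proof is correct and follows essentially the same route as the paper: set $Q_r(X_0,X)=Q(X_0,X_1)$ and establish $\overline{\mathcal{O}(X)|_{X_0}}=\mathcal{O}(X_1)|_{X_0}$ from the Runge hypothesis together with the closedness of $\mathcal{O}(X_1)|_{X_0}$ coming from the split assumption. You are in fact more thorough than the paper, which asserts the equality $(\overline{\mathcal{O}(X)|_{X_1}})|_{X_0}=\overline{\mathcal{O}(X)|_{X_0}}$ in one line and does not explicitly address the non-density and non-closedness clauses of the quasi-split definition.
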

\begin{proof}Since the pair $(X_0,X_1)$ is split, there is a closed subspace $Q(X_0,X_1)$ of $\mathcal{O}(X_0)$ such that
\begin{equation}\label{eq-qs1}
  \mathcal{O}(X_0)=\mathcal{O}(X_1)|_{X_0} \oplus Q(X_0,X_1).
\end{equation}
Define $Q_r(X_0,X)=Q(X_0,X_1)$, and note that since $(X_1,X)$ is Runge, we have $\mathcal{O}(X_1)= \overline{\mathcal{O}(X)|_{X_1}}.$ Therefore:
\begin{align*}\mathcal{O}(X_1)|_{X_0} &=\left(\overline{\mathcal{O}(X)|_{X_1}}\right)|_{X_0}\\
&=\overline{\mathcal{O}(X)|_{X_0}}.
\end{align*}
Therefore, \eqref{eq-qs1} takes the form:
\[  \mathcal{O}(X_0)=\overline{\mathcal{O}(X)|_{X_0}} \oplus Q_r(X_0,X),\]
which shows that $(X_0,X)$ is quasi-split.

\end{proof}
\section{Examples}\label{sec-examples}
We now apply Proposition~\ref{prop-nonhaus} and Theorem~\ref{thm-split} to study the cohomology of some well-known domains of elementary multi-variable complex analysis. Each of these is a  domain  in the unit bidisc $\Delta^2\subset\cx^2$. It suffices to find $H^{0,1}(\h)$, as $H^{1,1}(\h)\cong H^{0,1}(\h)\oplus H^{0,1}(\h)$, and $H^{2,1}(\h)\cong H^{0,1}(\h)$.
We already know that $H^{p,2}(\h)=0$ for each $p$.

Let $\Delta$ be the unit disc in the plane, and for $r>0$, let $ \Delta_r = \{z\in\cx\colon\abs{z}<r\}$
be the disc of radius $r$. Also, for $0\leq r <R\leq \infty$, let
$A(r,R)=\{z\in\cx\colon r<\abs{z}<R\}$ be an annulus of inner radius $r$ and outer radius $R$. We begin by observing the following:
\begin{enumerate}
\item if $0<r<1$, the pair $(\Delta_r,\Delta)$ is Runge. Indeed, the holomorphic polynomials are dense in both the domains.
\item if $0<r<1$, then the pair $(A(r,1),\Delta)$ is split, as one can see from Proposition~\ref{prop-split}. We can take 
\begin{equation}
\label{eq-arinfty}  Q(A(r,1),\Delta)=\left\{f\in \mathcal{O}(A(r,1))\left\vert f(z)= \sum_{\nu=-\infty}^{-1}a_\nu z^\nu\right.\right\}.
\end{equation}
\item If $0<r<R<1$, then the pair $(A(r,R),\Delta)$ is quasi-split. This follows from Proposition~\ref{prop-quasisplit},
since $(X_0,X_1)=(A(r,R),\Delta_R)$ is split and $(X_1,X)=(\Delta_R,\Delta)$ is Runge. In particular, we can take
\begin{equation}\label{eq-qrdef}
Q_r(A(r,R),\Delta)=Q(A(r,R),\Delta_R).
\end{equation}
\end{enumerate} 
We now consider several special cases of the domain $\h$ as defined in \eqref{eq-ghf}. Let $0<r_1,r_2<1$ be fixed in what follows.

(1)  Let 
$ \h_0= \left\{z\in\Delta^2 | \abs{z_1}< r_1, \text{ or } \abs{z_2}< r_2\right\}.$
 $\h_0$ corresponds to the choice $(X_0,X)=(\Delta_{r_1},\Delta)$ and $(Y_0,Y)=(\Delta_{r_2},\Delta)$ in \eqref{eq-ghf}.  By Corollary~\ref{cor-thm1} to Proposition~\ref{prop-nonhaus}, it follows that 
$H^{0,1}(\h_0)$ and $H^{1,1}(\h_0)\cong H^{0,1}(\h_0)\oplus H^{0,1}(\h_0)$ are both indiscrete.

(2)  Let $ \h_1= \left\{z\in\Delta^2 | \abs{z_1}>r_1, \text{ or } \abs{z_2}< r_2\right\},$
generalizing slightly the domain in \eqref{eq-hf}.
This corresponds to taking $(X_0,X)=(A(r,1),\Delta)$ and $(Y_0,Y)=(\Delta_{r_2}, \Delta)$ in \eqref{eq-ghf}. Since $(Y_0,Y)$ is Runge, by Corollary~\ref{cor-thm1} we know that $H^{0,1}(\h_1)$ and 
$H^{1,1}(\h_1)$ are indiscrete. Since $(X_0,X)$ is split, we can obtain more information using Theorem~\ref{thm-split}. We have
\[ H^{0,1}(\h_1)\cong \frac{Q(A(r_1,1),\Delta)\csor \mathcal{O}(\Delta_{r_2})}{({Q}(A(r_1,1),\Delta)\csor \mathcal{O}(\Delta))|_{A(r_1,1)\times \Delta_{r_2}}}.\]
From the explicit description of $Q(A(r_1,1),\Delta)$ in \eqref{eq-arinfty} 
it follows that an element of the numerator
may be represented by a Laurent series 
\[ \sum_{\mu=-\infty}^{-1}\sum_{\nu=0}^\infty a_{\mu,\nu}z_1^\mu z_2^\nu,\]
convergent in the domain $A(r_1,\infty)\times\Delta_{r_2}$, and the denominator is the subspace of series converging in the 
larger domain $ A(r_1,\infty)\times\Delta$. This allows us finally to parametrize the 
cohomology classes in $H^{0,1}(\h_1)$ by the double sequence of  coefficients
$\{a_{\mu,\nu}\}$.

(3)  Let $ \h_2= \left\{z\in\Delta^2 | \abs{z_1}>r_1, \text{ or } \abs{z_2}> r_2\right\},$
which corresponds to $(X_0,X)= (A(r_1,1),\Delta)$ and $(Y_0,Y)=(A(r_2,1),\Delta)$ in \eqref{eq-ghf}.
 Note that $\h_2$ is the difference of two polydiscs $\h_2 = \Delta^2\setminus \overline{(\Delta_{r_1}\times \Delta_{r_2})}$. Since
the compact set $ \overline{\Delta_{r_1}\times \Delta_{r_2}}$ has a basis of Stein neighborhoods, it is known  that $\dbar$ has closed range in $\mathcal{A}^{(0,1)}(\h_2)$, and $H^{0,1}(\h_2)$ is Hausdorff (see \cite[Teorema 3]{MR847655}.) This also follows from our computations:
thanks to Theorem~\ref{thm-split}, part (2), we know that $H^{0,1}(\h_2)$ is  Hausdorff, and   using \eqref{eq-split2invariant}:
\begin{align*} H^{0,1}(\h_2)&\cong \mathcal{Q}(X_0,X)\csor\mathcal{Q}(Y_0,Y)\\
&={Q}(A(r_1,1),\Delta)\csor{Q}(A(r_2,1),\Delta).\end{align*}
This can be identified with the collection of Laurent series 
\begin{equation}\label{eq-laurent}
\sum_{\mu=-\infty}^{-1} \sum_{\nu=-\infty}^{-1}a_{\mu,\nu}z_1^{\mu}z_2^{\nu},
\end{equation}
convergent in the domain $A(r_1,\infty)\times A(r_2,\infty)$, i.e., the cohomology can be identified with a space of holomorphic functions. This may be compared with \cite[page 49, example 4]{GriHar}.

 (4)  Let $R$ be such that $r_2<R<1$, and set  
$\h_3=\{z\in \Delta^2| \abs{z_1}> r_1, \text{ or } r_2< \abs{z_2}<R\}$. 
 This corresponds to the choice $(X_0,X)= (A({r_1},1),\Delta)$ (which is split) and $(Y_0,Y)= (A(r_2,R),\Delta)$ (which is quasi-split.)  We may think of this domain as a Hartogs figure from which a closed polydisc has been excised. 
 Using \cite[Teorema 3]{MR847655}, we see that $H^{0,1}(\h_3)$ is non-Hausdorff, and we will show 
 that this is a domain for which there is a non-zero reduced cohomology, as well as a non-zero indiscrete part of the cohomology, and this is in some sense the combination of the two previous examples. 
 
We explicitly 
 compute $H^{0,1}(\h_3)$   using \eqref{eq-h31}:
\begin{align*}
H^{0,1}_{\rm red}(\h_3)&= Q(X_0,X)\csor Q_r(Y_0,Y)\\
&= Q(A(r_1,1),\Delta)\csor Q_r(A(r_2,R),\Delta)\\
&=Q(A(r_1,1),\Delta)\csor Q(A(r_2,R),\Delta_R),
\end{align*}
where in the last line we have used \eqref{eq-qrdef}. This space can be naturally identified with the space of Laurent series of the form \eqref{eq-laurent} convergent in $A(r_1,\infty)\times A(r_2,\infty)$. Also, using \eqref{eq-h32} we have
\begin{align*}
H^{0,1}_{\rm ind}(\h_3)&\cong
\frac{ Q(X_0,X)\csor\overline{\mathcal{O}(Y)|_{Y_0}} 
}{Q(X_0,X)\csor \mathcal{O}(Y)|_{Y_0}}\\
&= \frac{ Q(A(r_1,1),\Delta)\csor\overline{\mathcal{O}(\Delta)|_{A(r_2,R)}}}{ (Q(A(r_1,1),\Delta)\csor\mathcal{O}(\Delta))|_{A(r_1,1)\times A(r_2,R)}},
\end{align*}
which again can be represented as a quotient of a space of Laurent series by a dense subspace.

\end{document}